\documentclass[a4paper,11pt]{amsart}

\usepackage{a4wide}
\usepackage[T1]{fontenc}
\usepackage{lmodern}
\usepackage{amsmath,amssymb,amsthm,mathtools}
\usepackage{enumitem}
\usepackage{microtype}
\usepackage{float}
\usepackage{tikz-cd}
\usepackage{url}
\usepackage[colorlinks=true,linkcolor=black,citecolor=black,urlcolor=blue]{hyperref}

\hypersetup{
  pdftitle={Grothendieck and ell-infinity-Grothendieck subspaces of ell-infinity},
  pdfauthor={Manuel Gonzalez and Tomasz Kania},
  pdfsubject={Grothendieck subspaces of ell-infinity},
  pdfkeywords={Grothendieck space, ell-infinity-Grothendieck subspace, reflexive quotient, Fredholm operator, stable random variable}
}

\allowdisplaybreaks
\setlist[enumerate]{label=\textup{(\roman*)},leftmargin=2.2em}

\newcommand{\K}{\mathbb K}
\newcommand{\N}{\mathbb N}
\newcommand{\R}{\mathbb R}
\newcommand{\C}{\mathbb C}
\newcommand{\cF}{\mathfrak c}
\newcommand{\ran}{\operatorname{ran}}
\newcommand{\Nstar}{\N^*}
\newcommand{\ontoarrow}{\twoheadrightarrow}

\newtheorem{theorem}{Theorem}[section]
\newtheorem{proposition}[theorem]{Proposition}

\newtheorem{corollary}[theorem]{Corollary}
\newtheorem{maintheorem}{Theorem}

\theoremstyle{definition}
\newtheorem{definition}[theorem]{Definition}
\newtheorem{example}[theorem]{Example}
\theoremstyle{remark}

\title[Grothendieck subspaces of $\ell_\infty$]
{Grothendieck and $\ell_\infty$-Grothendieck subspaces of $\ell_\infty$}

\author[M.~Gonz\'alez]{Manuel Gonz\'alez}
\address[M.~Gonz\'alez]{Departamento de Matem\'aticas, Estad\'istica y Computaci\'on\\
Facultad de Ciencias\\
Universidad de Cantabria\\
Avenida de los Castros, 48\\
39005 Santander, Spain}
\email{manuel.gonzalez@unican.es}

\author[T.~Kania]{Tomasz Kania\textsuperscript{*}}
\address[T.~Kania]{Mathematical Institute\\Czech Academy of Sciences\\\v Zitn\'a 25 \\115 67 Praha 1\\Czech Republic  and  Institute of Mathematics and Computer Science\\ Jagiellonian University\\ {\L}ojasiewicza 6, 30-348 Krak\'{o}w, Poland
}
\email{kania@math.cas.cz, tomasz.marcin.kania@gmail.com}
\thanks{\textsuperscript{*}Corresponding author: Tomasz Kania; \href{mailto:kania@math.cas.cz}{kania@math.cas.cz}.}
\thanks{RVO: 67985840.}

\date{}

\subjclass[2020]{Primary 46B20; Secondary 46B03, 46B08, 46B26, 47A53}
\keywords{Grothendieck space, $\ell_\infty$-Grothendieck subspace, reflexive quotient, Fredholm operator, stable random variable, complemented subspace}

\begin{document}

\begin{abstract}
Let $\cF=2^{\aleph_0}$.  We prove that $\ell_\infty$ contains
$2^{\cF}$ pairwise non-isomorphic non-reflexive Grothendieck subspaces.
They may all be chosen to contain the canonical copy of $c_0$ and to have an 
infinite-dimensional reflexive quotient.  This is optimal and answers
\cite[Problem~24]{GonzalezKania2021}.  We also show that the relative
$\ell_\infty$-Grothendieck property depends on the specified copy of $c_0$
and is not an isomorphic invariant of the subspace.  More generally, for
every closed $E\subseteq\ell_\infty$ there is an
$\ell_\infty$-Grothendieck subspace isomorphic to
$\ell_\infty\oplus_\infty E$; taking $E=c_0$ answers
\cite[Problem~1]{GonzalezLeonRomero2021}.
\end{abstract}

\maketitle

\section{Introduction}

A Banach space is \emph{Grothendieck} when weak-star convergent sequences in
its dual are weakly convergent.  Grothendieck proved that $\ell_\infty(\Gamma)$
has this property for every set $\Gamma$ \cite{Grothendieck1953}.  The
property is not inherited by closed subspaces, as the canonical copy of
$c_0$ in $\ell_\infty$ shows.  This makes the isomorphic diversity of
Grothendieck subspaces of $\ell_\infty$ a natural question.
Terminology and notation not explained in the Introduction are collected in
Section~\ref{sec:preliminaries}.

Problem~24 of \cite{GonzalezKania2021} asks whether the largest possible
number, namely $2^{\cF}$, can occur.  Our first result answers this question.

\begin{maintheorem}\label{thm:main}
There is a family $(M_\alpha)_{\alpha<2^{\cF}}$ of closed subspaces of
$\ell_\infty$ such that each $M_\alpha$ contains the canonical copy of
$c_0$, is non-reflexive and Grothendieck, and has infinite-dimensional
reflexive quotient $\ell_\infty/M_\alpha$.  The spaces $M_\alpha$ are
pairwise non-isomorphic.  Consequently, $\ell_\infty$ has exactly
$2^{\cF}$ isomorphism classes of Grothendieck subspaces.
\end{maintheorem}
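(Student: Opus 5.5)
We take the subspaces as kernels of quotient maps onto reflexive spaces. If $Q\colon\ell_\infty\ontoarrow R$ is a surjection onto a reflexive space $R$ with $c_0\subseteq\ker Q$, put $M=\ker Q$.

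\emph{Grothendieck property.} Grothendieck subspaces are stable under reflexive extensions (the three-space property with reflexive quotient). Concretely, take a weak$^*$-null sequence $(f_n)$ in $M^*$. Extend each $f_n$ by Hahn--Banach to $\tilde f_n\in\ell_\infty^*$ with uniformly bounded norms. A sliding-hump/compactness argument in $R^*$ then lets us correct the $\tilde f_n$ by elements of $Q^*(R^*)=M^\perp$ so that they become weak$^*$-null in $\ell_\infty^*$. Grothendieck's theorem then gives weak nullity, and restricting to $M$ shows $(f_n)$ is weakly null in $M^*$. We could also simply quote the known result that the Grothendieck property passes to subspaces with reflexive quotient.

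\emph{Non-reflexivity.} This is immediate, since $c_0\subseteq M$.

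\emph{Choice of quotients.} Let $R$ range over separable reflexive spaces that are quotients of $\ell_\infty$. Every separable reflexive $R$ is a quotient of $\ell_\infty$: $R^*$ is separable, so it embeds in $\ell_\infty^*$ weak$^*$-continuously via a bounded weak$^*$-to-weak$^*$ embedding. Alternatively, $\ell_\infty$ maps onto $\ell_2$ (via $\ell_\infty\to L_\infty\to L_2$ types or the $\ell_\infty\ontoarrow\ell_2$ map from Rosenthal), and hence onto any quotient of $\ell_2$, and more generally onto many reflexive spaces. Since the maps can be arranged to vanish on $c_0$ by composing with $\ell_\infty\to\ell_\infty/c_0$ and using that $\ell_\infty/c_0$ contains $\ell_\infty$ complementably, we get $c_0\subseteq M$. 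However, there are only $\cF$ separable reflexive spaces, so we need $2^{\cF}$ distinct kernels up to isomorphism. Since $\ell_\infty$ has only $2^{\cF}$ subspaces in all, the point is to distinguish them.

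\emph{Main obstacle: pairwise non-isomorphism.} The plan is to use the stable random variables mentioned in the keywords. These give a quotient $\ell_\infty\ontoarrow L_p$, or $\ell_p$, for $1<p<2$, and we would parametrize kernels $M_A$ by subsets $A\subseteq\cF$ through different quotient maps $Q_A\colon\ell_\infty\to\ell_2$. To recover $A$ from the isomorphism type of $M_A$, use Fredholm theory. Any isomorphism $T\colon M_A\to M_B$ extends, by injectivity of $\ell_\infty$, to an operator $\tilde T$ on $\ell_\infty$. Modulo weakly compact operators (which are strictly singular on $\ell_\infty$), $\tilde T$ should be Fredholm-like and respect the quotients, forcing $\ell_\infty/M_A$ and $\ell_\infty/M_B$ to be related in some invariant way.

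Honestly, a more robust route to $2^{\cF}$ is likely to use $\ell_\infty\cong\ell_\infty(\ell_\infty)$ and ultrapower/density-type invariants of the pair. I expect this distinguishing step to be the hard core of the argument. What I would try first is to define the invariant $\ell_\infty/\overline{\tilde T(M_A)}$ up to finite-dimensional perturbation, and show it determines $A$ up to a countable ambiguity. Since each class then contains at most $\cF$ of the sets $A$, we can select $2^{\cF}$ pairwise non-isomorphic $M_A$ by a counting argument.

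\emph{Optimality.} "Exactly $2^{\cF}$" follows because $\ell_\infty$ has cardinality $\cF$, so it has at most $2^{\cF}$ closed subspaces.
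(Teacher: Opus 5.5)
\textbf{Gap: no invariant separating $2^{\cF}$ kernels.} The Grothendieck, non-reflexivity and cardinality parts are fine. For the Grothendieck property, citing the known permanence result for subspaces with reflexive quotient is what the paper does; your sliding-hump sketch is not a proof by itself.

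The failure is at the step you flag yourself. The tool you are reaching for is the Lindenstrauss--Rosenthal theorem, not mere injectivity. An isomorphism $M_A\to M_B$ between subspaces with infinite-dimensional reflexive quotients has a \emph{Fredholm} extension to $\ell_\infty$, and that extension induces a Fredholm operator $\ell_\infty/M_A\to\ell_\infty/M_B$. So your proposed invariant, $\ell_\infty/\overline{\tilde T(M_A)}$ up to finite-dimensional perturbation, is exactly the Fredholm-equivalence class of the quotient and nothing more.
\begin{enumerate}
\item If every $Q_A$ maps onto $\ell_2$, this invariant is the same for every $A$ and recovers nothing.
\item With separable quotients it takes at most $\cF$ values, since there are only $\cF$ separable Banach spaces up to isomorphism. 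Your own counting remark therefore already rules out the plan as set up.
\end{enumerate}

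\textbf{What is missing.} You need $2^{\cF}$ pairwise Fredholm-inequivalent \emph{non-separable} reflexive quotients of $\ell_\infty/c_0$, of density up to $\cF$. The paper takes $X_A=\ell_{s'}(A,\ell_{q'})$ for infinite $A\subseteq[a,b]$, with $1<s<a<b<2$. The dual $\ell_s(A,\ell_q)$ contains a complemented $\ell_r$ exactly for $r\in A\cup\{s\}$. This property survives Fredholm equivalence, so $A$ is recovered.

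To realise $X_A$ as a quotient vanishing on $c_0$, the paper proceeds in three steps.
\begin{enumerate}
\item Two layers of stable variables embed $\ell_s(A,\ell_q)$ into $L_1([0,1]^{\Gamma_A})$ with $|\Gamma_A|\leqslant\cF$.
\item $[0,1]^{\Gamma_A}$ is separable by Hewitt--Marczewski--Pondiczery, hence a continuous image of $\Nstar$. Lifting the measure embeds this $L_1$ isometrically into $M(\Nstar)\cong c_0^\perp$.
\item Dualising gives $Q_A$.
\end{enumerate}

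\textbf{Two auxiliary claims are false, and in the wrong direction.} First, not every separable reflexive space is a quotient of $\ell_\infty$. If $R$ is a quotient of $\ell_\infty$, then $R^*$ embeds into the $L$-space $\ell_\infty^*$ and so has cotype $2$. Hence $\ell_p$ with $1<p<2$, whose dual $\ell_{p'}$ ($p'>2$) fails cotype $2$, is not a quotient of $\ell_\infty$.

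Second, for the same reason stable variables do not give quotients $\ell_\infty\ontoarrow\ell_p$ with $1<p<2$. They embed $\ell_p$ into $L_1$, and dualising yields quotients onto $\ell_{p'}$ with $p'>2$. This is why the paper's quotients are $\ell_{s'}(A,\ell_{q'})$, with the stable-variable construction applied to their duals $\ell_s(A,\ell_q)$.
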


The quotient result underlying Theorem~\ref{thm:main} is stated in terms of
Fredholm
operators.  Recall that an operator is Fredholm if its kernel and cokernel
are finite-dimensional and its range is closed.
Two Banach spaces are Fredholm equivalent when there is a Fredholm operator
between them, and Fredholm inequivalent otherwise.

\begin{maintheorem}\label{thm:quotients}
The space $\ell_\infty/c_0$ has $2^{\cF}$ infinite-dimensional reflexive
quotients which are pairwise Fredholm inequivalent.
\end{maintheorem}

We also consider the relative $\ell_\infty$-Grothendieck property introduced in
\cite{GonzalezLeonRomero2021}.  Its precise formulation, which involves the
specified copy of $c_0$, is recalled in Section~2.  It is not an isomorphic
property of the Banach space alone; this dependence on ambient position is
made explicit in Section~3.

\begin{maintheorem}\label{thm:relative-main}
For every closed subspace $E\subseteq\ell_\infty$, the space
$\ell_\infty\oplus_\infty E$ is isomorphic to an
$\ell_\infty$-Grothendieck subspace of $\ell_\infty$.  It is Grothendieck if
and only if $E$ is Grothendieck.  Consequently, an
$\ell_\infty$-Grothendieck subspace of $\ell_\infty$ need not be
Grothendieck.
\end{maintheorem}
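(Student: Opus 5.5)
The plan is to build the subspace as a direct sum $Y\oplus Z$ inside $\ell_\infty$. Here $Y$ is a copy of $\ell_\infty$ that contains the canonical copy of $c_0$ and is itself $\ell_\infty$-Grothendieck, and $Z$ is a copy of $E$ sitting in a complementary block of coordinates. The argument rests on two facts about the relative property recalled in Section~2, which I take as the working hypotheses of the proof.
\begin{enumerate}
\item It is witnessed by how $M$ interacts with the coordinate functionals and with the fixed copy of $c_0$.
\item It is inherited upward: if $c_0\subseteq M\subseteq N\subseteq\ell_\infty$ and $M$ is $\ell_\infty$-Grothendieck, then so is $N$.
\end{enumerate}
The first step is to check (ii) directly from the definition. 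It should be immediate, because enlarging the subspace only strengthens the hypothesis of being weak$^*$-null on the subspace, so whatever conclusion the definition demands is demanded of fewer sequences. I flag this monotonicity as the point to confirm against the exact wording in Section~2; if it fails, the verification in the last paragraph must be carried out for $Y\oplus Z$ directly.

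\emph{Construction.} Split $\N=A\sqcup B$ into two infinite sets and fix bijections $\sigma\colon\N\to A$ and $\tau\colon\N\to B$. Define
\[
Z=\{x\in\ell_\infty : x|_A=0,\ x\circ\tau\in E\},
\]
which is isometric to $E$. Next I need a copy $Y$ of $\ell_\infty$ with $c_0\subseteq Y$ and $Y\cap Z=\{0\}$, such that $Y+Z$ is closed and isomorphic to $\ell_\infty\oplus_\infty E$. A naive choice $Y=\ell_\infty(A)$ fails, since it misses $c_0(B)$. So I would take a graph-type subspace
\[
Y=\{x\in\ell_\infty : x|_B = R(x|_A)\},
\]
where $R\colon\ell_\infty(A)\to\ell_\infty(B)$ is bounded and is chosen so that $c_0(B)\subseteq R(c_0(A))$ while $R$ kills a large part of $\ell_\infty(A)$. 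A concrete candidate pairs the points of $A$ with those of $B$ on a sparse subsequence and copies those coordinates. Then $Y\cong\ell_\infty(A)\cong\ell_\infty$ via restriction to $A$, and $Y$ contains $c_0$.

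The map $(y,z)\mapsto y+z$ from $Y\oplus_\infty Z$ into $\ell_\infty$ is then an isomorphism onto its range. Indeed, $y$ is recovered from $(y+z)|_A$, and then $z=(y+z)-y$. Hence $M:=Y+Z\cong\ell_\infty\oplus_\infty E$ is closed and contains $c_0$.

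\emph{The Grothendieck dichotomy.} If $M$ is Grothendieck, then $E$, being a complemented subspace and hence a quotient, is Grothendieck. Conversely, $\ell_\infty$ is Grothendieck by \cite{Grothendieck1953}, and finite direct sums of Grothendieck spaces are Grothendieck because the dual is the $\ell_1$-sum. Taking $E=c_0$ gives an $\ell_\infty$-Grothendieck subspace that is not Grothendieck, which is the final assertion.

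\emph{Main obstacle.} The hard step is showing that $M$ is $\ell_\infty$-Grothendieck relative to the canonical $c_0$. By monotonicity (ii) it suffices to prove this for $Y$. Here the point is that $Y$ is a copy of $\ell_\infty$ positioned so that the canonical $c_0$ is its own ``$c_0$'' transported by the restriction map, up to the bounded perturbation $R$. I would first try to show that a sequence of functionals that is weak$^*$-null on $Y$ pulls back, via the isomorphism $Y\to\ell_\infty(A)$, to a weak$^*$-null sequence in $\ell_\infty(A)^*$. Grothendieck's theorem then makes that sequence weakly null, which should be what the definition requires. The delicate part is choosing $R$ so that this transport respects the coordinate structure demanded by the definition, in particular the requirement involving the specified copy of $c_0$. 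If the sparse-pairing choice of $R$ does not work, the fallback is to choose $A$ and $R$ so that $c_0$ is mapped onto $c_0(A)$ exactly.
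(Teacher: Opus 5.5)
Your framework matches the paper's mechanism. The monotonicity principle (ii) is correct: restrict operators as in Proposition~\ref{prop:operator-criterion}. Any Grothendieck subspace $Y\supseteq c_0$ is automatically $\ell_\infty$-Grothendieck, since weak$^*$-null sequences in $Y^*$ are weakly null and so vanish on $j^{**}\ell_\infty\subseteq Y^{**}$. So the step you call the main obstacle is trivial, and $R$ plays no role there.

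\textbf{The gap is in the construction, which you treated as routine.} The graph $Y=\{x:x|_B=R(x|_A)\}$ never contains the canonical $c_0$. For $b\in B$ the unit vector $e_b$ has $e_b|_A=0$, so $e_b\in Y$ would force $e_b|_B=R(0)=0$. Neither $c_0(B)\subseteq R(c_0(A))$ nor your fallback helps, because the obstruction is that restriction to $A$ kills $c_0(B)$.

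No choice of $R$ can work. Since $Z$ vanishes on $A$ and the $Y$-component of $x\in M$ is determined by $x|_A$, every $x\in M$ with $x|_A=0$ lies in $Z$. Hence $c_0(B)\cap M\subseteq Z$. This has two consequences:
\begin{enumerate}
\item If $E$ does not contain the canonical $c_0$ (e.g.\ $E=\{0\}$ or $E$ reflexive), then $M\not\supseteq c_0$.
\item For $E=c_0$, which is exactly the case needed for the final assertion, the map $x\mapsto(x|_B-R(x|_A))\circ\tau$ is a bounded operator $M\to c_0$. It restricts to an isometry of $c_0(B)$ onto $c_0$. So $M$ fails the relative property by Proposition~\ref{prop:operator-criterion}; this is the phenomenon of Example~\ref{ex:ambient-position}.
\end{enumerate}

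\textbf{The missing idea.} You need to put the whole canonical $c_0$ inside the $\ell_\infty$-summand, while keeping a complement $W$ for $E$ with $W\cap c_0=\{0\}$ and $W+c_0$ closed. No block of coordinates can serve as $W$.

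The paper does this non-coordinatewise. Both $\Phi(c_0\oplus\{0\})$ and $c_0$ have non-reflexive quotients, so Lindenstrauss--Rosenthal gives an automorphism $U$ of $\ell_\infty$ with $U\Phi(x,0)=x$ on $c_0$. Then $S_E=U\Phi(\ell_\infty\oplus_\infty E)$ contains $c_0$ inside the copy $U\Phi(\ell_\infty\oplus\{0\})$ of $\ell_\infty$.

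Your plan can also be repaired elementarily:
\begin{enumerate}
\item Partition $\N$ into infinite sets $(N_k)_{k\in\N}$ and choose free ultrafilters $\mathcal U_k$ on $N_k$.
\item Set $L_kx=\lim_{\mathcal U_k}x|_{N_k}$ and $M_E=\{x\in\ell_\infty:(L_kx)_k\in E\}$.
\item Then $Px=\sum_k(L_kx)\mathbf 1_{N_k}$ is a norm-one projection on $\ell_\infty$.
\item $Y=\ker P$ contains $c_0$ and is complemented in $\ell_\infty$. Hence it is Grothendieck, and isomorphic to $\ell_\infty$ by Lindenstrauss's theorem.
\item Finally $M_E=Y\oplus\{\sum_k a_k\mathbf 1_{N_k}:(a_k)\in E\}\cong\ell_\infty\oplus_\infty E$.
\end{enumerate}
From there the rest of your argument goes through unchanged.
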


To prove Theorem~\ref{thm:quotients}, let $1<s<a<b<2$ and let $A$ range over the
infinite subsets
of $[a,b]$.  The standard mixed sum $\ell_s(A,\ell_q)$ remembers $A$ through
the exponents $r$ for which it contains a complemented copy of $\ell_r$.
This information is invariant under Fredholm equivalence.  Two layers of stable
random variables embed $\ell_s(A,\ell_q)$ into an $L_1$-space, and a direct
measure-lifting argument places that $L_1$-space in
$c_0^\perp\subseteq\ell_\infty^*$.  Duality then produces the required
quotients of $\ell_\infty/c_0$.  Section~2 fixes the terminology and proves
the auxiliary transfer and embedding results needed later.  Section~3 proves
Theorem~\ref{thm:relative-main}, Section~4 proves
Theorem~\ref{thm:quotients}, and Section~5 completes the proof of
Theorem~\ref{thm:main}.

\section{Preliminaries}\label{sec:preliminaries}

All Banach spaces are over $\K\in\{\R,\C\}$, and all subspaces are closed.
We write $p'$ for the conjugate exponent of $1<p<\infty$.  For a family
$(E_i)_{i\in J}$, the standard notation $\ell_p(J,E_i)$ denotes its
$\ell_p$-sum.  In particular,
\[
 \ell_s(A,\ell_q)
 =\bigl\{(x_q)_{q\in A}:x_q\in\ell_q,
        \ (\|x_q\|_q)_{q\in A}\in\ell_s(A)\bigr\}.
\]
Its dual is canonically
$\ell_{s'}(A,\ell_{q'})$ whenever $1<s<\infty$ and
$A\subset(1,\infty)$.

We first formulate the relative $\ell_\infty$-Grothendieck property for a pair.
Let $S$ be a
Banach space and let $j:c_0\to S$ be an isomorphic embedding. By the 
Hahn--Banach theorem, $j^{**}:c_0^{**}=\ell_\infty\to S^{**}$ is an 
isomorphic embedding.

\begin{definition}\label{def:relative}
The pair $(S,j)$ is \emph{$\ell_\infty$-Grothendieck} if, whenever
$x_n^*\to x^*$ in $\sigma(S^*,S)$, one has
$\langle j^{**}z,x_n^*\rangle\to\langle j^{**}z,x^*\rangle$ for every
$z\in\ell_\infty$.

A closed subspace $S\subseteq\ell_\infty$ is an \emph{$\ell_\infty$-Grothendieck subspace} if $S$ contains the canonical $c_0$, and the pair $(S,j)$ is $\ell_\infty$-Grothendieck with $j$ 
equal to the inclusion.
\end{definition}

Obviously, being an $\ell_\infty$-Grothendieck subspace of $\ell_\infty$ is not invariant under isomorphisms. However, being an $\ell_\infty$-Grothendieck pair is invariant under isomorphisms in the following sense: if $U:S\to S_1$ is an isomorphism and $Uj=j_1V$ for an automorphism $V$ of $c_0$, then $(S,j)$ has the property if and only if $(S_1,j_1)$ does.  
Indeed, passing to biduals gives $U^{**}j^{**}=j_1^{**}V^{**}$, where $V^{**}$ is an
automorphism of $\ell_\infty$, and one pulls weak-star convergent sequences
back by $U^*$.  The latter property need not be
invariant under an isomorphism which does not respect the specified copy of
$c_0$; an explicit example is given after Theorem~\ref{thm:relative-construction}.
Every Grothendieck space $S$ makes $(S,j)$
$\ell_\infty$-Grothendieck for every embedding $j:c_0\to S$.

We shall also use Fredholm equivalence and two permanence principles.  Two
Banach spaces $E$ and $F$ are \emph{Fredholm equivalent} when there is a
Fredholm operator $E\to F$.  We shall use the standard equivalence
\[
 E\text{ and }F\text{ are Fredholm equivalent}
 \quad\Longleftrightarrow\quad
 E\oplus G\cong F\oplus H
\]
for some finite-dimensional spaces $G$ and $H$.  In particular, Fredholm
equivalence is symmetric.  To see the displayed equivalence, split the
kernel and range of a Fredholm operator; conversely, compose an isomorphism
$E\oplus G\to F\oplus H$ with the canonical inclusion of $E$ and the
projection onto $F$.

It also preserves the occurrence of a complemented copy of $\ell_r$,
$1<r<\infty$.  Indeed, suppose that $V\cong\ell_r$ is complemented in
$E\oplus G$, where $G$ is finite-dimensional.  Then
$V_0=V\cap E$ is the kernel of the finite-rank coordinate map $V\to G$, so
it has finite codimension in $V$ and is isomorphic to $\ell_r$.  If $P$ is
a projection onto $V$ and $R:V\to V_0$ is a projection, then
$R P|_E$ is a projection from $E$ onto $V_0$.  Thus adjoining or removing a
finite-dimensional summand does not change this property.

We require two permanence results.  If $Y$ is a subspace of a Grothendieck
space $X$ and $X/Y$ is either reflexive or separable, then $Y$ is
Grothendieck.  The reflexive case is
\cite[Corollary~1.4]{MartinezCervantesRodriguez2022}, and the separable case is
\cite[Proposition~3.1]{GonzalezLeonRomero2021}.
We also use the
following parts of the Lindenstrauss--Rosenthal theorem
\cite[Theorem~2.f.12]{LindenstraussTzafriri1996}.  If $M,N\subseteq
\ell_\infty$ and $U:M\to N$ is an isomorphism, then:
\begin{enumerate}
\item if $\ell_\infty/M$ and $\ell_\infty/N$ are non-reflexive, $U$ extends
to an automorphism of $\ell_\infty$;
\item if both quotients are infinite-dimensional and reflexive, $U$ has a
Fredholm extension to $\ell_\infty$.
\end{enumerate}
See also the original paper \cite{LindenstraussRosenthal1969}.

The following consequence transfers Theorem~\ref{thm:quotients} to kernels.

\begin{proposition}\label{prop:kernel-transfer}
Let $(Q_\gamma:\ell_\infty\ontoarrow X_\gamma)_{\gamma\in\Gamma}$ be
quotient maps onto infinite-dimensional reflexive spaces.  If the spaces
$X_\gamma$ are pairwise Fredholm inequivalent, then the kernels
$\ker Q_\gamma$ are pairwise non-isomorphic Grothendieck spaces.  If
$c_0\subseteq\ker Q_\gamma$ for every $\gamma$, all these kernels are
$\ell_\infty$-Grothendieck subspaces.
\end{proposition}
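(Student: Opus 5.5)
The proof splits into three independent pieces: the Grothendieck property, pairwise non-isomorphism, and the relative property.

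\emph{Grothendieck.} Write $M_\gamma=\ker Q_\gamma$. Since $\ell_\infty$ is Grothendieck and $\ell_\infty/M_\gamma\cong X_\gamma$ is reflexive, the permanence result \cite[Corollary~1.4]{MartinezCervantesRodriguez2022} recalled above shows directly that each $M_\gamma$ is Grothendieck.

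\emph{Non-isomorphism.} Suppose, towards a contradiction, that $U:M_\gamma\to M_\delta$ is an isomorphism for some $\gamma\neq\delta$. Both quotients $\ell_\infty/M_\gamma$ and $\ell_\infty/M_\delta$ are infinite-dimensional and reflexive, so part (ii) of the Lindenstrauss--Rosenthal theorem yields a Fredholm operator $T:\ell_\infty\to\ell_\infty$ extending $U$. Because $T(M_\gamma)=M_\delta$, the operator $T$ induces $\widetilde T:X_\gamma\to X_\delta$ via $\widetilde T Q_\gamma=Q_\delta T$, and we must check that $\widetilde T$ is Fredholm. This is the only point needing care.

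\emph{Range.} $Q_\delta T$ has closed range of finite codimension, being the composition of a Fredholm operator with a surjection. Hence $\operatorname{ran}\widetilde T=\operatorname{ran} Q_\delta T$ is closed and of finite codimension in $X_\delta$.

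\emph{Kernel.} If $\widetilde T Q_\gamma x=0$, then $Tx\in M_\delta=T(M_\gamma)$, so $Tx=Tm$ for some $m\in M_\gamma$. Thus $x\in M_\gamma+\ker T$, and therefore $\ker\widetilde T=Q_\gamma(\ker T)$ is finite-dimensional.

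So $\widetilde T$ is a Fredholm operator $X_\gamma\to X_\delta$, contradicting the assumed Fredholm inequivalence. Hence the kernels are pairwise non-isomorphic.

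\emph{Relative property.} Assume $c_0\subseteq M_\gamma$. A Grothendieck space $S$ makes $(S,j)$ an $\ell_\infty$-Grothendieck pair for every embedding $j$: any $\sigma(S^*,S)$-convergent sequence is then weakly convergent, so it converges on $j^{**}z\in S^{**}$. Taking $j$ to be the inclusion, each $M_\gamma$ is an $\ell_\infty$-Grothendieck subspace.

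The only mild obstacle is the induced-operator verification above. Everything else is a direct citation of the permanence result and the Lindenstrauss--Rosenthal theorem.
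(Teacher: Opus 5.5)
Your proposal is correct and follows the paper's proof essentially step for step. The Grothendieck property comes from the reflexive-quotient permanence result, and a Lindenstrauss--Rosenthal Fredholm extension induces a Fredholm operator $X_\gamma\to X_\delta$. Your identity $\ker\widetilde T=Q_\gamma(\ker T)$ is the paper's $\ker\widehat U/(\ker\widehat U\cap\ker Q_\gamma)$, and your range argument matches its use of $\ran\widehat U+\ker Q_\delta$. The relative property follows, as in the paper, because Grothendieck spaces give $\ell_\infty$-Grothendieck pairs for every embedding of $c_0$.
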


\begin{proof}
The first permanence result applies because $\ell_\infty$ is Grothendieck.
Suppose that $U:\ker Q_\gamma\to\ker Q_\delta$ is an isomorphism.  A
Fredholm extension $\widehat U$ supplied by Lindenstrauss--Rosenthal induces
an operator
\[
 \widetilde U:\ell_\infty/\ker Q_\gamma
 \longrightarrow\ell_\infty/\ker Q_\delta,
 \qquad \widetilde U(x+\ker Q_\gamma)
       =\widehat Ux+\ker Q_\delta.
\]
Its defect spaces are
\[
 \ker\widetilde U\cong
 \ker\widehat U/(\ker\widehat U\cap\ker Q_\gamma),
 \qquad
 \operatorname{coker}\widetilde U\cong
 \ell_\infty/(\ran\widehat U+\ker Q_\delta).
\]
The first is a quotient of the finite-dimensional space
$\ker\widehat U$.  The second is finite-dimensional, and
$\ran\widehat U+\ker Q_\delta$ is closed because it contains the closed
finite-codimensional space $\ran\widehat U$.  Hence $\widetilde U$ is
Fredholm.  Thus
$X_\gamma$ and $X_\delta$ are Fredholm equivalent, which forces
$\gamma=\delta$.  The last assertion follows from Definition~\ref{def:relative}
because the kernels are Grothendieck.
\end{proof}

We finally record the measure-theoretic embedding that will be used in
Section~4.  Write $\Nstar=\beta\N\setminus\N$ and identify
$C(\Nstar)$ with $\ell_\infty/c_0$.

\begin{proposition}\label{prop:L1-annihilator}
If $K$ is compact and separable and $\mu$ is a regular Borel probability
measure on $K$, then $L_1(K,\mu)$ embeds isometrically into
$c_0^\perp\subseteq\ell_\infty^*$.
\end{proposition}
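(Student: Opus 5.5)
We work on the $\Nstar$ side. Since $C(\Nstar)=\ell_\infty/c_0$, the dual of the quotient map $\pi:\ell_\infty\to\ell_\infty/c_0$ gives an isometric isomorphism $\pi^*:C(\Nstar)^*=M(\Nstar)\to c_0^\perp\subseteq\ell_\infty^*$. So it suffices to embed $L_1(K,\mu)$ isometrically into $M(\Nstar)$. The plan is to realise $K$ as a continuous image of $\Nstar$ and lift $\mu$ to a measure $\nu$ on $\Nstar$ whose push-forward is $\mu$. Then $f\mapsto (f\circ\varphi)\,\nu$ will be the desired isometry.

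First, a continuous surjection $\varphi:\Nstar\ontoarrow K$ exists. Since $K$ is separable, fix a dense sequence $(k_n)$ in $K$ and define $n\mapsto k_n$. This extends to a continuous $\beta\N\to K$, whose image is closed and dense, hence all of $K$. We then need the restriction to $\Nstar$ to be onto. To guarantee this, first choose a map $\N\to K$ that hits each $k_m$ infinitely often, for instance by enumerating the $k_m$ with infinite repetition. Then every $k_m$ is a cluster point of the sequence along some infinite set, so it lies in $\varphi(\Nstar)$. That image is compact and contains a dense set, so $\varphi(\Nstar)=K$.

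Next we lift the measure. The composition operator $T:C(K)\to C(\Nstar)$, $g\mapsto g\circ\varphi$, is an isometric embedding because $\varphi$ is surjective. The functional $g\mapsto\int g\,d\mu$ on $T(C(K))$ has norm $1$ and takes the value $1$ at $T1=1$. By Hahn--Banach it extends to a norm-one functional $\nu$ on $C(\Nstar)$ with $\nu(1)=1$, so $\nu$ is a regular Borel probability measure by Riesz. By construction $\varphi_*\nu=\mu$. Alternatively, the Markov--Kakutani or Riesz approach gives the same lifting.

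Finally, define $J:L_1(K,\mu)\to M(\Nstar)$ by $Jf=(f\circ\varphi)\,\nu$. This is well defined on $\mu$-classes, since $\varphi_*\nu=\mu$ means $\mu$-null sets pull back to $\nu$-null sets, and Borel measurability is preserved under composition with the continuous map $\varphi$. Moreover
\[
\|Jf\|=\int_{\Nstar}|f\circ\varphi|\,d\nu=\int_K|f|\,d\mu=\|f\|_1 ,
\]
by the change-of-variables formula for push-forwards. Hence $J$ is a linear isometry into $M(\Nstar)$, and $\pi^*\circ J$ is an isometric embedding of $L_1(K,\mu)$ into $c_0^\perp$.

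The main delicate point is the measure lifting. We must ensure that the Hahn--Banach extension is positive, which follows from $\|\nu\|=\nu(1)=1$, and that push-forward holds for all Borel sets and not only against continuous functions. The latter follows from uniqueness in the Riesz representation, since both $\varphi_*\nu$ and $\mu$ are regular on the compact space $K$. The remaining steps are routine.
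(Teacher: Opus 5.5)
Your proposal is correct and follows essentially the paper's route. A dense sequence with infinite repetitions gives a continuous surjection $\Nstar\ontoarrow K$, Hahn--Banach lifts $\mu$ to a probability measure $\nu$ with push-forward $\mu$, and $f\mapsto(f\circ\varphi)\nu$ composed with the adjoint of the quotient map gives the isometry into $c_0^\perp$. The only minor differences are that you prove surjectivity onto $K$ by compactness of $\varphi(\Nstar)$ plus density, where the paper uses an ultrafilter at each point, and that you define the map on all of $L_1$ by change of variables, where the paper extends by density from $C(K)$.
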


\begin{proof}
Choose a sequence $(x_n)$ in $K$ whose range is dense and in which every
term occurs infinitely often.  Let $\tau(n)=x_n$ and let
$\beta\tau:\beta\N\to K$ be its continuous extension.  For $x\in K$ and a
neighbourhood $V$ of $x$, put $A_V=\{n:x_n\in V\}$.  Finite intersections
of these sets are infinite, so together with the cofinite sets they generate
a free filter.  Any ultrafilter extending it gives
$p\in\bigcap_V\overline{A_V}\cap\Nstar$, and
$\beta\tau(p)=x$ by regularity of $K$.  Hence
$\pi=\beta\tau|_{\Nstar}:\Nstar\to K$ is surjective.

The isometric unital embedding $f\mapsto f\circ\pi$ from $C(K)$ to
$C(\Nstar)$ allows the functional
$f\circ\pi\mapsto\int_K f\,d\mu$ to be extended, by Hahn--Banach, to a
norm-one functional on $C(\Nstar)$ taking $1$ to $1$.  This extension is
positive and hence is represented by a probability measure $\nu$ on
$\Nstar$ satisfying $\pi_*\nu=\mu$.

For $f\in C(K)$, the map $f\mapsto(f\circ\pi)\nu$ takes values in
$M(\Nstar)$ and satisfies
\[
 \|(f\circ\pi)\nu\|=\int_{\Nstar}|f\circ\pi|\,d\nu
 =\int_K|f|\,d\mu.
\]
Thus it extends by density to an isometry $L_1(K,\mu)\to M(\Nstar)$, and 
$M(\Nstar)=C(\Nstar)^*$ is the range of the adjoint of the
quotient map $\ell_\infty\to\ell_\infty/c_0$, hence isometric to
$c_0^\perp$.
\end{proof}

This construction may also be viewed through the split dual sequence
\[
 0\longrightarrow(\ell_\infty/c_0)^*
 \longrightarrow\ell_\infty^*\longrightarrow\ell_1\longrightarrow0.
\]
We use this only as a structural viewpoint: the splitting alone does not
place a given reflexive subspace in the first term, whereas the measure lift
above does so directly.

\section{The relative property and ambient position}

We first give an operator characterisation of Definition~\ref{def:relative}.

\begin{proposition}\label{prop:operator-criterion}
Let $j:c_0\to S$ be an isomorphic embedding.  The pair $(S,j)$ is
$\ell_\infty$-Grothendieck if and only if, for every bounded
$T:S\to c_0$, the operator $Tj:c_0\to c_0$ is weakly compact.
\end{proposition}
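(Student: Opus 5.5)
The plan is to translate weak-star null sequences in $S^*$ into operators $S\to c_0$, and then use that an operator $c_0\to c_0$ is weakly compact exactly when its bidual maps $\ell_\infty$ into $c_0$.

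\emph{Dictionary.} A bounded operator $T:S\to c_0$ corresponds to the bounded sequence $x_n^*=e_n^*\circ T\in S^*$. This sequence is weak-star null, since $\langle x,x_n^*\rangle=(Tx)_n\to0$. Conversely, every weak-star null sequence $(x_n^*)$ in $S^*$ is bounded by the uniform boundedness principle. It therefore defines $Tx=(\langle x,x_n^*\rangle)_n\in c_0$. By subtracting the limit, weak-star convergence $x_n^*\to x^*$ reduces to weak-star null sequences.

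\emph{Key identity.} Take $R=Tj:c_0\to c_0$. For $z\in\ell_\infty$, the $n$-th coordinate of $R^{**}z\in\ell_\infty$ is
\[
\langle R^{**}z,e_n^*\rangle=\langle z,R^*e_n^*\rangle=\langle z,j^*T^*e_n^*\rangle=\langle j^{**}z,x_n^*\rangle .
\]
Hence the condition $\langle j^{**}z,x_n^*\rangle\to0$ for all $z\in\ell_\infty$ is exactly the statement $R^{**}(\ell_\infty)\subseteq c_0$.

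By Gantmacher's theorem, an operator $R:c_0\to c_0$ is weakly compact if and only if $R^{**}(c_0^{**})\subseteq c_0$. Combining this with the identity gives both directions.

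\emph{Forward direction.} Suppose the pair is $\ell_\infty$-Grothendieck and $T:S\to c_0$ is bounded. Then $x_n^*=T^*e_n^*$ is weak-star null, so $\langle j^{**}z,x_n^*\rangle\to0$ for every $z\in\ell_\infty$. Thus $(Tj)^{**}$ maps $\ell_\infty$ into $c_0$, and $Tj$ is weakly compact.

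\emph{Converse.} Let $x_n^*\to x^*$ weak-star. Set $y_n^*=x_n^*-x^*$ and build $T$ from $(y_n^*)$ as above. Weak compactness of $Tj$ gives $\langle j^{**}z,y_n^*\rangle\to0$ for every $z$, which is the required convergence.

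The only step needing care is the identification $\langle j^{**}z, T^*e_n^*\rangle=\langle (Tj)^{**}z,e_n^*\rangle$. Here one must check that the duality pairings are consistent, using that $j^{**}$ is the bidual of $j$ and $(Tj)^{**}=T^{**}j^{**}$. This is a routine verification, so I expect no real obstacle.
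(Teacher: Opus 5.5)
Your proposal is correct and follows the paper's proof essentially step for step. Both directions rest on the identity $\langle (Tj)^{**}z,e_n^*\rangle=\langle j^{**}z,T^*e_n^*\rangle$, on the correspondence between weak-star null sequences in $S^*$ (after subtracting $x^*$) and bounded operators $S\to c_0$, and on the bidual (Gantmacher) criterion for weak compactness of $Tj:c_0\to c_0$.
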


\begin{proof}
Suppose first that $(S,j)$ is $\ell_\infty$-Grothendieck.  If $(e_n^*)$ is
the canonical basis of $c_0^*=\ell_1$, then $(T^*e_n^*)$ is weak-star null
in $S^*$.  Consequently, for $z\in\ell_\infty$,
\[
 \langle (Tj)^{**}z,e_n^*\rangle
 =\langle j^{**}z,T^*e_n^*\rangle\longrightarrow0.
\]
Thus $(Tj)^{**}\ell_\infty\subseteq c_0$, and the bidual criterion for weak
compactness gives the conclusion.

Conversely, let $x_n^*\to x^*$ in $\sigma(S^*,S)$ and put
$u_n^*=x_n^*-x^*$.  Uniform boundedness and pointwise convergence show that
\(
 Ts=(\langle u_n^*,s\rangle)_{n\in\N}\qquad(s\in S)
\)
defines a bounded operator $T:S\to c_0$.  By assumption,
$(Tj)^{**}z\in c_0$ for every $z\in\ell_\infty$.  Since
$T^*e_n^*=u_n^*$, we obtain
$\langle j^{**}z,u_n^*\rangle=((Tj)^{**}z)_n\to0$.  This is precisely the
required convergence on $j^{**}\ell_\infty$.
\end{proof}

Let $\Phi:\ell_\infty\oplus_\infty\ell_\infty\to\ell_\infty$ be the
interlacing isometry
$\Phi(x,y)=(x_1,y_1,x_2,y_2,\ldots)$, and consider
$\Phi(c_0\oplus\{0\})$.  The map
$\theta(\Phi(x,0))=x$ is an onto isometry from this subspace to the
canonical $c_0$.  Moreover,
\[
 \ell_\infty/\Phi(c_0\oplus\{0\})
 \cong (\ell_\infty/c_0)\oplus_\infty\ell_\infty,
\]
whereas $\ell_\infty/c_0\cong C(\Nstar)$ is an infinite-dimensional
$C(K)$-space and hence is non-reflexive.  Thus both quotients are
non-reflexive.  Lindenstrauss--Rosenthal therefore provides an automorphism
$U$ of $\ell_\infty$ such that
\begin{equation}\label{eq:U-on-c0}
 U\Phi(x,0)=x\qquad(x\in c_0).
\end{equation}
The same automorphism can alternatively be obtained from the diagonal
principles for exact sequences in \cite[Section~2.11]{CabelloCastillo2023}.

The construction is summarised by the following commutative diagram with
exact rows; $\overline U$ is the isomorphism induced on the quotients.
{\normalsize
\[
\begin{tikzcd}[row sep=huge,column sep=huge]
0 \arrow[r] & \Phi(c_0\oplus\{0\}) \arrow[r] \arrow[d,"\theta"'] &
 \ell_\infty \arrow[r] \arrow[d,"U"'] &
 (\ell_\infty/c_0)\oplus_\infty\ell_\infty
 \arrow[r] \arrow[d,"\overline U"'] & 0\\
0 \arrow[r] & c_0 \arrow[r] & \ell_\infty \arrow[r] &
 \ell_\infty/c_0 \arrow[r] & 0.
\end{tikzcd}
\]
}

\begin{theorem}\label{thm:relative-construction}
For every closed $E\subseteq\ell_\infty$, the space
\[
 S_E=U\Phi(\ell_\infty\oplus_\infty E)
\]
is an $\ell_\infty$-Grothendieck subspace of $\ell_\infty$ and is isomorphic
to $\ell_\infty\oplus_\infty E$.  It is Grothendieck if and only if $E$ is
Grothendieck.
\end{theorem}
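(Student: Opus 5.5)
The plan is to read off each of the three assertions from the commutative diagram above, using Proposition~\ref{prop:operator-criterion} for the relative property.

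\emph{Closedness, the canonical $c_0$, and the isomorphism type.} Since $U\Phi$ is an isomorphic embedding of $\ell_\infty\oplus_\infty\ell_\infty$ into $\ell_\infty$ and $E$ is closed, $S_E$ is a closed subspace and $U\Phi$ restricts to an isomorphism $\ell_\infty\oplus_\infty E\to S_E$. By \eqref{eq:U-on-c0} we have $U\Phi(c_0\oplus\{0\})=c_0$, so $S_E$ contains the canonical $c_0$. Moreover, under the isomorphism $W=U\Phi|_{\ell_\infty\oplus E}$ the inclusion $c_0\hookrightarrow S_E$ corresponds to the embedding $j_0:x\mapsto(x,0)$ of $c_0$ into the first summand. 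Indeed, $W j_0=\iota$, with $V$ the identity automorphism of $c_0$.

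\emph{The relative property.} By the invariance remark following Definition~\ref{def:relative}, it suffices to show that the pair $(\ell_\infty\oplus_\infty E,j_0)$ is $\ell_\infty$-Grothendieck. I would use Proposition~\ref{prop:operator-criterion}. Take any bounded $T:\ell_\infty\oplus_\infty E\to c_0$. Then $Tj_0$ is the restriction to $c_0$ of the operator $T|_{\ell_\infty\oplus\{0\}}:\ell_\infty\to c_0$. Every operator from a Grothendieck space into a separable space is weakly compact. Hence this restriction is weakly compact, and so is $Tj_0$. Alternatively, one can argue directly: if $x_n^*\to x^*$ weak-star in $(\ell_\infty\oplus E)^*$, then restricting to the summand $\ell_\infty\oplus\{0\}$ gives weak-star convergence in $\ell_\infty^*$. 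This is weak convergence by Grothendieck's theorem, so it tests correctly against $j_0^{**}z=(z,0)\in\ell_\infty^{**}$.

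\emph{The Grothendieck equivalence.} The Grothendieck property passes to complemented subspaces and to finite direct sums; for the latter, split a weak-star convergent sequence in the dual of the sum into its two components. Since $S_E\cong\ell_\infty\oplus_\infty E$ and $\ell_\infty$ is Grothendieck, $S_E$ is Grothendieck exactly when $E$ is.

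I do not expect a genuine obstacle here. The only delicate point is bookkeeping: one must make sure that the canonical $c_0$ inside $S_E$ really sits in the $\ell_\infty$-summand. This is exactly what \eqref{eq:U-on-c0} guarantees, and it is what allows the relative property to hold even when $E$ is not Grothendieck.
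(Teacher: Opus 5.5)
Your proposal is correct and is essentially the paper's proof. The paper likewise applies Proposition~\ref{prop:operator-criterion}: it observes that $x\mapsto T(U\Phi(x,0))$ is an operator $\ell_\infty\to c_0$, hence weakly compact, and agrees with $T$ on $c_0$ by \eqref{eq:U-on-c0}; it then obtains the Grothendieck equivalence from the complemented copy $U\Phi(\{0\}\oplus E)$ of $E$. Your detour through the pair $(\ell_\infty\oplus_\infty E,j_0)$ via the invariance remark only repackages that step. One small imprecision in your alternative weak$^*$ argument is harmless: $j_0^{**}z$ is $(\kappa^{**}z,0)$ with $\kappa:c_0\hookrightarrow\ell_\infty$, not the canonical image of $(z,0)$, but weak convergence in $\ell_\infty^*$ tests against all of $\ell_\infty^{**}$.
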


\begin{proof}
The space $S_E$ is closed and has the stated isomorphism type.  By
\eqref{eq:U-on-c0}, it contains the canonical $c_0$.  Given $T:S_E\to c_0$,
define $\widetilde T:\ell_\infty\to c_0$ by
$\widetilde T x=T(U\Phi(x,0))$.  Every operator from the Grothendieck space
$\ell_\infty$ to $c_0$ is weakly compact.  On $c_0$, identity
\eqref{eq:U-on-c0} gives $\widetilde T=T$, so Proposition~\ref{prop:operator-criterion}
shows that $S_E$ is $\ell_\infty$-Grothendieck.

The space $U\Phi(\{0\}\oplus E)$ is complemented in $S_E$ and isomorphic to
$E$.  Hence Grothendieckness of $S_E$ implies that of $E$.  Conversely,
$\ell_\infty\oplus_\infty E$ is Grothendieck whenever $E$ is, so the
isomorphic space $S_E$ is Grothendieck in this case.
\end{proof}

The theorem proves Theorem~\ref{thm:relative-main}, and $E=c_0$ gives the
example requested in
\cite[Problem~1]{GonzalezLeonRomero2021}.

\begin{example}\label{ex:ambient-position}
The relative notion is not intrinsic to the isomorphism class of the
subspace.  Put $R=\Phi(\ell_\infty\oplus_\infty c_0)$.  Both $R$ and
$S_{c_0}=UR$ contain the canonical $c_0$ and are isomorphic.  The second is
$\ell_\infty$-Grothendieck, whereas the first is not.  Indeed, define
$P:R\to c_0$ by $P\Phi(x,y)=y$.  Its restriction to the canonical $c_0$ is
the surjective even-coordinate projection $c_0\to c_0$, which is not weakly
compact, so Proposition~\ref{prop:operator-criterion} applies.  The mechanism
is that $U$ carries $\Phi(c_0\oplus\{0\})$, rather than the canonical $c_0$
contained in $R$, onto the canonical $c_0$.
\end{example}

\section{Fredholm-inequivalent reflexive quotients}

Let us fix
 $1<s<a<b<2$, $I=[a,b]$,
and let $\mathcal A$ be the family of all infinite subsets of $I$.  For
$A\in\mathcal A$, set
\begin{equation}\label{eq:XA-standard}
 X_A=\left(\bigoplus_{q\in A}\ell_{q'}\right)_{\ell_{s'}}
     =\ell_{s'}(A,\ell_{q'}).
\end{equation}
Thus $X_A^*=\ell_s(A,\ell_q)$ canonically, and both spaces are reflexive.
The restriction to infinite sets ensures that the outer exponent $s$ occurs
in the complemented-subspace structure below.  Since $|I|=\cF$, we also
have $|\mathcal A|=2^{\cF}$.

The relative positions of the outer exponent $s$ and the interval $I$, as
well as the encoding of a set $A\subseteq I$, are illustrated in
Figure~\ref{fig:parameters}.

\begin{figure}[H]
\centering
\begin{tikzpicture}[x=1.18cm,scale=1.05]
  \draw[->] (-0.5,0) -- (9.6,0);
  \foreach \x/\lab in {0/1,1.5/s,3/a,7.5/b,9/2}
    \draw (\x,0.12) -- (\x,-0.12) node[below] {$\lab$};
  \draw[line width=1.2pt] (3,0) -- (7.5,0);
  \foreach \x in {3.3,3.9,4.45,5.15,5.8,6.45,7.1}
    \fill (\x,0) circle (2.4pt);
  \node[above] at (5.25,0.25) {$A\subseteq I=[a,b]$};
\end{tikzpicture}
\caption{The outer exponent $s$ lies below $I$.  The complemented
$\ell_r$-exponents of $\ell_s(A,\ell_q)$ are $A\cup\{s\}$, so
$s\notin I$ allows one to recover $A$.}
\label{fig:parameters}
\end{figure}

The following standard mixed-sum fact is the only structural information
about $\ell_p$-spaces that we need.

\begin{proposition}\label{prop:mixed-sum}
Let $A\in\mathcal A$ and $1<r<\infty$.  Then $\ell_r$ is isomorphic to a
complemented subspace of $\ell_s(A,\ell_q)$ if and only if
$r=s$ or $r\in A$.
\end{proposition}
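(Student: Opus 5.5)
The plan has an easy direction (sufficiency) and a harder one (necessity).

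\emph{Sufficiency.} If $r\in A$, the summand $\ell_r$ sits inside $\ell_s(A,\ell_q)$ as the $r$-th coordinate. The coordinate projection onto it is contractive. If $r=s$, choose a unit vector $u_q\in\ell_q$ for each $q\in A$. The span of the vectors $u_q$, placed in their own coordinates, is isometric to $\ell_s(A)$, and since $A$ is infinite it contains an isometric copy of $\ell_s$. Norm-one functionals $u_q^*\in\ell_{q'}$ with $\langle u_q^*,u_q\rangle=1$ give a contractive projection $(x_q)\mapsto(\langle u_q^*,x_q\rangle u_q)_{q\in A}$ onto $\ell_s(A)$. Restricting to a countable subset of $A$, whose $\ell_s$-span is complemented in $\ell_s(A)$, yields a complemented copy of $\ell_s$.

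\emph{Necessity.} Suppose $V\cong\ell_r$ is complemented in $Z=\ell_s(A,\ell_q)$. Since $V$ is separable, it lies in $\ell_s(A_0,\ell_q)$ for some countable $A_0\subseteq A$, and this space is $1$-complemented in $Z$. So we may assume $A$ is countable, enumerate it as $(q_k)$, and regard $Z$ as having the block decomposition $Z_k=\ell_{q_k}$.

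Take the unit vector basis $(v_n)$ of $V$. It is weakly null because $r>1$. A gliding-hump argument applies: for each fixed $k$ the coordinate $P_k v_n$ is weakly null in $\ell_{q_k}$, and the tails $\sum_{k>N}$ are controlled. After a diagonal perturbation this produces a subsequence of $(v_n)$ equivalent to a sequence $(w_n)$ of the following form. Either the $w_n$ are eventually in finitely many coordinates $Z_1\oplus\dots\oplus Z_N$, or they are block vectors across coordinates, $w_n=\sum_{k\in F_n}z_k$ with $(F_n)$ successive and $z_k\in\ell_{q_k}$ themselves blocks.

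In the first case, $(w_n)$ is a weakly null normalised sequence in $\ell_{q_1}\oplus\dots\oplus\ell_{q_N}$. It therefore has a subsequence equivalent to the $\ell_{q_k}$ basis for some $k$, because a finite sum of $\ell_p$'s has only these spreading models on weakly null sequences. Since it is also equivalent to the $\ell_r$ basis, $r=q_k\in A$. In the second case, separate the components across the outer sum. If $\|P_kv_n\|$ stays in a single coordinate for infinitely many $n$, we are back in the first case. Otherwise the disjoint outer supports give $\|\sum a_nw_n\|=(\sum\|a_n\|^s\ldots)$, and homogeneity over outer blocks yields lower and upper $\ell_s$ estimates. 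This identifies the sequence with $\ell_s$ and forces $r=s$.

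The main obstacle is the mixed situation, where each $w_n$ has mass spread over many inner coordinates with different exponents $q$. The inner exponents lie in $[a,b]$ with $a>s$. Writing $\|w_n\|^s=\sum_k\|z_k\|^s$, every disjoint sum then satisfies $\|\sum a_nw_n\|=(\sum|a_n|^s)^{1/s}$ exactly, because the outer blocks are disjoint. So the real difficulty lies only in the case where infinitely many $v_n$ share an inner coordinate. I would handle this by a Ramsey-type dichotomy on the sequence $\sup_k\|P_kv_n\|$, treating limit values bounded away from $0$ versus tending to $0$. The first alternative gives a subsequence equivalent to $\ell_{q}$ for a limit exponent $q$; since $A$ need not be closed, I would use complementation, via a Pełczyński decomposition argument, to rule out exponents $q\notin A$. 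The second alternative gives $\ell_s$. Alternatively, one can simply cite the standard classification of complemented $\ell_r$ subspaces in $\ell_s$-sums of $\ell_q$'s, stating this as standard as the paper does.
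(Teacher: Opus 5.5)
Your sufficiency argument is correct and matches the paper's. The necessity argument has a genuine gap: the case analysis you run on subsequences of the single sequence $(v_n)$ is not valid.

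Write $e_{q,j}$ for the $j$th unit vector of the $q$th summand and consider $v_n=e_{q_1,n}+e_{q_{n+1},1}$. These vectors are neither eventually in finitely many summands nor supported on successive outer sets $F_n$. The persistent component $e_{q_1,n}$ does not send you back to the finite-sum case. Moreover, although $\sup_k\|P_kv_n\|=1$ for all $n$,
\[
 \Bigl\|\sum_n a_nv_n\Bigr\|^s=\|a\|_{q_1}^s+\|a\|_s^s ,
\]
and this is equivalent to $\|a\|_s^s$ because $q_1>s$. Thus $[v_n]\cong\ell_s$; it is even complemented, via $x\mapsto\sum_n\langle e_{q_{n+1},1}^*,x\rangle v_n$. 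Yet your ``first alternative'' would assign it an exponent $q\in[a,b]$.

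More generally, let $c_k=\lim_n\|P_kv_n\|$ be the limiting masses. If they are spread over infinitely many summands, the subsequence behaves like $(\sum_kc_k^s\|a\|_{q_k}^s)^{1/s}$, plus an $\ell_s$ term for escaping mass. It does not behave like a single limit exponent. So the Pe{\l}czy\'nski-decomposition step meant to exclude $q\notin A$ has nothing concrete to act on.

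The missing idea is to pass to blocks of the $\ell_r$-basis rather than to subsequences of one fixed sequence. Let $J:\ell_r\to\ell_s(A,\ell_q)$ be the embedding; the paper then splits into two cases.

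\emph{Some $P_qJ$ is not strictly singular.} Then $\ell_r$ and $\ell_q$ share an infinite-dimensional subspace, so $r=q\in A$ by total incomparability.

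\emph{Every finite $P_FJ$ is strictly singular.} Choose normalised successive blocks $u_n$ with $\|P_{F_1\cup\dots\cup F_{n-1}}Ju_n\|<\varepsilon_n$, and finite sets $F_n$ capturing $Ju_n$. Then $(Ju_n)$ is a small perturbation of an outer-disjoint sequence. Hence the $\ell_r$-basis, $(u_n)$, $(Ju_n)$ and the $\ell_s$-basis are all equivalent, which gives $r=s$.

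In your example, flat averages of $m$ basis vectors kill the $q_1$-component, since $m^{1/q_1-1/s}\to0$. Complementation is never used, and limit exponents never arise.

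If you prefer to stay with one sequence, the correct replacement for your dichotomy is the spreading-model formula, valid after passing to a subsequence:
\[
 \Bigl\|\sum_ia_ie_i\Bigr\|^s=\sum_kc_k^s\|a\|_{q_k}^s+d^s\|a\|_s^s ,
\]
where $d$ is the escaping mass. Testing on $a=(1,\dots,1)$ gives $r=s$ when $d>0$. When $d=0$, dominated convergence over $k$ forces $r=q_k$ for some $k$ with $c_k>0$.
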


\begin{proof}
Each coordinate copy of $\ell_q$, $q\in A$, is $1$-complemented.  Since
$A$ is infinite, choosing one unit vector in each of countably many
coordinates gives a $1$-complemented copy of $\ell_s$.

Conversely, let $J:\ell_r\to\ell_s(A,\ell_q)$ be an isomorphic embedding.
Every vector in an $\ell_s$-sum has countable support; applying this to a
countable dense subset of $J\ell_r$ shows that the range of $J$ lies in a
closed countable coordinate subsum.  Distinct $\ell_p$-spaces are totally
incomparable, as follows from the saturation result
\cite[Proposition~2.a.2]{LindenstraussTzafriri1996} and Pitt's theorem
\cite{Pitt1936}.

Let $P_q$ denote the coordinate projections.  If some $P_qJ$ is not
strictly singular, then $\ell_r$ and $\ell_q$ have isomorphic
infinite-dimensional subspaces, whence $r=q$.  Otherwise every finite
coordinate projection $P_FJ$ is strictly singular.  Enumerate the relevant
coordinates.  Inductively, after choosing $F_1,\ldots,F_{n-1}$, take a
normalised successive block $u_n$ so far out that
$\|P_{F_1\cup\cdots\cup F_{n-1}}Ju_n\|<\varepsilon_n$, and then choose a
finite set $F_n$, disjoint from its predecessors, which captures the
remaining coordinates of $Ju_n$ up to $\varepsilon_n$.  Thus
$\|Ju_n-P_{F_n}Ju_n\|<2\varepsilon_n$.  Choose the positive summable
sequence $(\varepsilon_n)$ small enough for the small-perturbation principle.
The
sequence $(u_n)$ is equivalent to the $\ell_r$-basis, whereas the disjointly
supported sequence $(P_{F_n}Ju_n)$ is equivalent to the $\ell_s$-basis.
Thus $r=s$.  This is the standard mixed-sum dichotomy; compare
\cite[Theorem~2.d.1]{LindenstraussTzafriri1996}.
\end{proof}

\begin{proposition}\label{prop:fredholm-separation}
If $A,B\in\mathcal A$ and a Fredholm operator $X_A\to X_B$ exists, then
$A=B$.
\end{proposition}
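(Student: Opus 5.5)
The plan is to pass to duals and use the complemented-$\ell_r$ invariant from Proposition~\ref{prop:mixed-sum}. Suppose $T:X_A\to X_B$ is Fredholm. By the displayed equivalence in Section~\ref{sec:preliminaries}, there are finite-dimensional spaces $G,H$ with
\[
 X_A\oplus G\cong X_B\oplus H .
\]
Taking adjoints of this isomorphism gives $X_A^*\oplus G^*\cong X_B^*\oplus H^*$. Here $G^*,H^*$ are still finite-dimensional, and $X_A^*=\ell_s(A,\ell_q)$, $X_B^*=\ell_s(B,\ell_q)$ canonically. Hence $\ell_s(A,\ell_q)$ and $\ell_s(B,\ell_q)$ are Fredholm equivalent. (Alternatively, $T^*$ is itself Fredholm, since $T$ has closed range and finite-dimensional kernel and cokernel.)

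Next I apply the permanence remark from Section~\ref{sec:preliminaries}: adjoining or removing a finite-dimensional summand does not change whether a space contains a complemented copy of $\ell_r$, for $1<r<\infty$. Fix $r\in(1,\infty)$. Then $\ell_r$ is complemented in $\ell_s(A,\ell_q)$ if and only if it is complemented in $\ell_s(A,\ell_q)\oplus G^*$. By the isomorphism, this holds if and only if it is complemented in $\ell_s(B,\ell_q)\oplus H^*$, which in turn holds if and only if it is complemented in $\ell_s(B,\ell_q)$. Proposition~\ref{prop:mixed-sum} then yields
\[
 A\cup\{s\}=B\cup\{s\}.
\]

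Finally, $s<a$, so $s\notin I\supseteq A\cup B$. Removing $s$ from both sides gives $A=B$.

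There is no real obstacle, because all the heavy lifting sits in Proposition~\ref{prop:mixed-sum} and the finite-dimensional-summand remark. The only point to check is the duality step: the dual of a Fredholm equivalence is again a Fredholm equivalence, which holds because $(E\oplus G)^*\cong E^*\oplus G^*$. One could instead work directly with $X_A$, using exponents $q'$ and $s'$, since a complemented $\ell_r$ in a reflexive space dualizes to a complemented $\ell_{r'}$; passing to duals just matches the statement of Proposition~\ref{prop:mixed-sum} as written.
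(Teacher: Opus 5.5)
Your proposal is correct and is essentially the paper's own proof. You pass to the dual Fredholm equivalence (dualising $X_A\oplus G\cong X_B\oplus H$ is interchangeable with the paper's observation that $T^*$ is Fredholm), and you use the finite-dimensional-summand remark to see that the set of exponents $r$ with a complemented $\ell_r$ is preserved. From this, Proposition~\ref{prop:mixed-sum} gives $A\cup\{s\}=B\cup\{s\}$, and $s\notin I$ yields $A=B$.
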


\begin{proof}
The adjoint is a Fredholm operator
$\ell_s(B,\ell_q)\to\ell_s(A,\ell_q)$.  Fredholm equivalence preserves the
exponents for which a complemented $\ell_r$ occurs.  Proposition~\ref{prop:mixed-sum}
therefore gives $B\cup\{s\}=A\cup\{s\}$.  Since $s\notin I$, we have
$A=B$.
\end{proof}

Proposition~\ref{prop:fredholm-separation} already gives the required
number of pairwise Fredholm-inequivalent
reflexive spaces.  We next realise all of them as quotients of
$\ell_\infty/c_0$.

We use stable random variables to realise these spaces as subspaces of
$L_1$-spaces.  For $0<r<2$, a real standard symmetric $r$-stable random
variable $Z_r$ is
normalised by
$\mathbb E\exp(itZ_r)=\exp(-|t|^r)$.  In the complex case we use an isotropic
complex $r$-stable variable, normalised by
\[
 \mathbb E\exp\bigl(i\operatorname{Re}(\overline z Z_r)\bigr)
 =\exp(-|z|^r)\qquad(z\in\C).
\]
If $Z_{r,j}$ are independent copies and $a_j$ are scalars, stability means
that $\sum_j a_jZ_{r,j}$ has the same distribution as
$(\sum_j|a_j|^r)^{1/r}Z_r$; the positive factor is its \emph{scale}.  For
$0<t<r$, put $m_{t,r}=\|Z_r\|_{L_t}$.  The moments are finite and depend
continuously on $(t,r)$ in this region.  These standard facts may be found
in \cite{BretagnolleDacunhaKrivine1967}.

\begin{theorem}\label{thm:stable-embedding}
There are constants $0<c\leqslant C<\infty$, depending only on $s,a,b$,
such that, for every $A\in\mathcal A$, the space
$\ell_s(A,\ell_q)$ embeds into $L_1(K_A,\mu_A)$ with distortion at most
$C/c$, where
\[
 K_A=[0,1]^{\Gamma_A},\qquad
 \Gamma_A=(A\times\N)\mathbin{\dot\cup}A,
\]
and $\mu_A$ is product Lebesgue measure.  The compact space $K_A$ is
separable.
\end{theorem}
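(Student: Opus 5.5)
The plan is to use the two layers of stable variables suggested by the index set $\Gamma_A=(A\times\N)\mathbin{\dot\cup}A$. On $K_A=[0,1]^{\Gamma_A}$ with product Lebesgue measure, every coordinate is an independent uniform variable. For each $(q,n)\in A\times\N$ we push it forward to a standard $q$-stable variable $Z_{q,n}$, and for each $q\in A$ to an $s$-stable variable $W_q$. Everything is then independent. To $x=(x_q)_{q\in A}$ with $x_q=(x_{q,n})_n\in\ell_q$ we assign
\[
 Jx=\sum_{q\in A}W_q\Bigl(\sum_{n}x_{q,n}Z_{q,n}\Bigr).
\]
We first define $J$ on finitely supported vectors, prove two-sided estimates, and then extend $J$ by density.

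The inner layer is handled by conditioning. By $q$-stability, $Y_q:=\sum_n x_{q,n}Z_{q,n}$ has the law of $\|x_q\|_q Z_q$. Condition on the family $(Y_q)_{q\in A}$, which is independent of $(W_q)$. Then $s$-stability of the $W_q$ shows that $Jx$ has the conditional law of $\bigl(\sum_q|Y_q|^s\bigr)^{1/s}Z_s$. Hence
\[
 \|Jx\|_{L_1}=m_{1,s}\,\mathbb E\Bigl(\sum_{q\in A}\|x_q\|_q^s|Z_q^{(q)}|^s\Bigr)^{1/s},
\]
where the $Z^{(q)}_q$ are independent $q$-stable variables. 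So it remains to compare $\mathbb E\|(\lambda_q\xi_q)\|_{\ell_s}$ with $\|(\lambda_q)\|_{\ell_s}$, where $\lambda_q=\|x_q\|_q$ and $\xi_q=|Z_q^{(q)}|$.

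The upper bound follows from Jensen's inequality, since $1/s<1$:
\[
 \mathbb E\Bigl(\sum\lambda_q^s\xi_q^s\Bigr)^{1/s}\le\Bigl(\sum\lambda_q^s\,\mathbb E\xi_q^s\Bigr)^{1/s}.
\]
Because $s<a\le q$, we have $\mathbb E\xi_q^s=m_{s,q}^s$, and this is bounded uniformly on the compact set $q\in[a,b]$ by continuity of the moments. The lower bound is the main obstacle, because we need to avoid losing anything from the infinitely many coordinates. The first idea is to use the norm inequality $\|\cdot\|_{\ell_s}\ge\|\cdot\|_{\ell_\infty}$, but that is too weak. Instead we apply Minkowski in reverse. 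The map $v\mapsto\mathbb E\|(v_q\xi_q)\|_{\ell_s}$ dominates $\|(v_q\,\mathbb E\xi_q)\|_{\ell_s}$: write $\ell_s=(\ell_{s'})^*$, fix a norming $y\in\ell_{s'}$ of the deterministic vector $(\lambda_q\mathbb E\xi_q)$, and note that $\mathbb E\|\cdot\|\ge\mathbb E\,|\langle y,\cdot\rangle|\ge|\langle y,\mathbb E\cdot\rangle|$. This gives the lower bound with constant $\inf_{q\in I}m_{1,q}>0$. Thus $c=m_{1,s}\inf_q m_{1,q}$ and $C=m_{1,s}\sup_q m_{s,q}$, and both depend only on $s,a,b$. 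In the complex case the same argument works with isotropic variables.

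Measurability is routine: the conditional computation only involves countably many coordinates, and Fubini applies on the product. Separability of $K_A$ follows from the Hewitt--Marczewski--Pondiczery theorem, since $|\Gamma_A|\le\cF$ and $[0,1]$ is separable. Finally, $J$ extends from finitely supported vectors to an isomorphic embedding of $\ell_s(A,\ell_q)$ into $L_1(K_A,\mu_A)$ with distortion at most $C/c$.
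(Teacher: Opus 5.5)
Your proof is correct and is essentially the paper's argument. You condition on the inner $q$-stable sums and use $s$-stability to reduce to $m_{1,s}\,\mathbb E\bigl(\sum_q\lambda_q^s\xi_q^s\bigr)^{1/s}$; you then bound this above by concavity of $t\mapsto t^{1/s}$ (the paper's Lyapunov step) and below by convexity of the $\ell_s$-norm via a norming functional (the paper's Jensen step), which gives the same constants $m_{1,s}\inf_q m_{1,q}$ and $m_{1,s}\sup_q m_{s,q}$, and you obtain separability of $K_A$ from Hewitt--Marczewski--Pondiczery exactly as the paper does.
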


\begin{proof}
On the indicated product space choose mutually independent standard
$q$-stable variables $\zeta_{q,n}$, $(q,n)\in A\times\N$, and mutually
independent standard $s$-stable variables $\theta_q$, $q\in A$, the two
families being independent.  They are realised on the cube by applying
Borel quantile maps to the corresponding coordinate functions.  For a
finitely supported
$x=(x_q)_{q\in A}$ with finitely supported coordinates, define
\[
 \Psi_Ax=\sum_{q\in A}\theta_q
             \sum_{n=1}^{\infty}x_q(n)\zeta_{q,n}.
\]
All sums are finite at this stage.  Conditioning first on the inner sums
identifies the conditional $L_1$-norm through $s$-stability.  Jensen's
inequality gives the lower estimate and Lyapunov's inequality gives the
upper estimate:
\[
 m_{1,s}\Big(\sum_{q\in A}m_{1,q}^s\|x_q\|_q^s\Big)^{1/s}
 \leqslant \|\Psi_Ax\|_{L_1}
 \leqslant
 m_{1,s}\Big(\sum_{q\in A}m_{s,q}^s\|x_q\|_q^s\Big)^{1/s}.
\]
The assumption $s<a\leqslant q$ ensures that the $s$th moments exist.
Continuity and positivity of the moments on the compact interval $[a,b]$
give uniform lower and upper constants.  Density extends $\Psi_A$ to the
required embedding.

Finally, $|\Gamma_A|\leqslant\cF$.  The Hewitt--Marczewski--Pondiczery
theorem \cite[Theorem~2.3.15]{Engelking1989} shows that a product of at most
$\cF$ separable spaces is separable, so $K_A$ is separable.
\end{proof}

If $A$ is countable, the probability space in the theorem may be replaced
by the standard interval and the target by $L_1(0,1)$.  This cannot be done
for uncountable $A$: the space $\ell_s(A,\ell_q)$ has uncountable density,
whereas $L_1(0,1)$ is separable.  The topological separability of $K_A$, not
norm separability of $L_1(K_A,\mu_A)$, is what is used below.

Combining the stable-variable embedding with the preliminary embedding into
$c_0^\perp$ yields the required quotient maps.
\begin{proposition}\label{prop:quotient-map}
For every $A\in\mathcal A$ there is a quotient map
$Q_A:\ell_\infty\ontoarrow X_A$ such that $c_0\subseteq\ker Q_A$.
\end{proposition}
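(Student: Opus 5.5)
We compose the embeddings already established and then dualise. By Theorem~\ref{thm:stable-embedding}, $X_A^*=\ell_s(A,\ell_q)$ embeds isomorphically into $L_1(K_A,\mu_A)$, and $K_A$ is compact and separable. The measure $\mu_A$, being a product of Lebesgue measures, is a regular Borel probability measure on $K_A$ (more precisely, we take its unique regular Borel extension). Proposition~\ref{prop:L1-annihilator} therefore gives an isometric embedding of $L_1(K_A,\mu_A)$ into $c_0^\perp\subseteq\ell_\infty^*$. Composing the two yields an isomorphic embedding
\[
J_A:X_A^*\longrightarrow \ell_\infty^*,\qquad \ran J_A\subseteq c_0^\perp .
\]

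Next we produce an operator in the other direction and show it is onto. Let $\kappa:\ell_\infty\to\ell_\infty^{**}$ be the canonical embedding and define
\[
Q_A=J_A^*\circ\kappa:\ell_\infty\longrightarrow X_A^{**}=X_A,
\]
using reflexivity of $X_A$. Concretely, $\langle Q_Az,x^*\rangle=\langle J_Ax^*,z\rangle$ for $z\in\ell_\infty$ and $x^*\in X_A^*$. Because $J_A$ is an isomorphic embedding, the map $J_A^*:\ell_\infty^{**}\to X_A$ is surjective.

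To see that $Q_A$ itself is surjective, we compute its adjoint. Pairing against $\ell_\infty$ shows that $Q_A^*=J_A$ under the identification $X_A^*\cong X_A^*$. Since $J_A$ is bounded below, the closed range theorem shows that $Q_A$ is onto. By the open mapping theorem, $\ell_\infty/\ker Q_A\cong X_A$, which is what a quotient map means here; if an isometric quotient is required, we renorm $X_A$ by the induced quotient norm. Finally, if $z\in c_0$, then $\langle Q_Az,x^*\rangle=\langle J_Ax^*,z\rangle=0$ because $J_Ax^*\in c_0^\perp$. Hence $c_0\subseteq\ker Q_A$.

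The argument needs only one genuine check: that $\mu_A$ satisfies the hypotheses of Proposition~\ref{prop:L1-annihilator}, namely that product Lebesgue measure on an uncountable cube is, or extends to, a regular Borel probability measure with the same $L_1$-space. I expect this to be the main obstacle. It follows from the standard fact that on $[0,1]^{\Gamma}$ the product measure on the Baire $\sigma$-algebra extends uniquely to a regular Borel measure, and that the Baire sets are $L_1$-dense. Alternatively, $L_1$ of the Baire restriction embeds isometrically into $L_1$ of the regular extension, which is all that is needed. Everything else is routine duality.
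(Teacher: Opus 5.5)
Your proposal is correct and follows essentially the paper's route. You compose $\Psi_A$ with the measure lift to get $J_A$ with range in $c_0^\perp$, define $Q_A$ by duality so that $Q_A^*=J_A$, and then obtain surjectivity from the closed range theorem and $c_0\subseteq\ker Q_A$ from $\ran J_A\subseteq c_0^\perp$. Your remark about passing from the product (Baire) measure to its regular Borel extension supplies a detail the paper leaves implicit when it applies Proposition~\ref{prop:L1-annihilator}.
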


\begin{proof}
Theorem~\ref{thm:stable-embedding} and Proposition~\ref{prop:L1-annihilator}
give an embedding $\Psi_A:\ell_s(A,\ell_q)\to L_1(K_A,\mu_A)$ and an
isometry $R_A:L_1(K_A,\mu_A)\to M(\Nstar)$.  Let
$q:\ell_\infty\to\ell_\infty/c_0$ be the quotient map.  Its adjoint
identifies $M(\Nstar)$ with $c_0^\perp$; put
$J_A=q^*R_A\Psi_A$.  Define
$Q_A:\ell_\infty\to X_A=\ell_s(A,\ell_q)^*$ by
\[
 \langle Q_Ax,y\rangle=\langle J_Ay,x\rangle
 \qquad(x\in\ell_\infty,\ y\in\ell_s(A,\ell_q)).
\]
Reflexivity gives $Q_A^*=J_A$.  Since $J_A$ is bounded below, the
closed-range theorem shows that $\ran Q_A$ is closed.  Moreover,
$(\ran Q_A)^\perp=\ker Q_A^*=\ker J_A=\{0\}$, so the range is dense and
hence $Q_A$ is surjective.  Its vanishing on $c_0$
follows from $J_A(\ell_s(A,\ell_q))\subseteq c_0^\perp$.
\end{proof}

Since $Q_A$ annihilates $c_0$, we have
$Q_A=\overline Q_Aq$ for a quotient map $\overline Q_A$.  The two halves of
the realisation are displayed below.
{\large
\[
\begin{tikzcd}[row sep=huge,column sep=large]
\ell_s(A,\ell_q) \arrow[r,"\Psi_A"]
 \arrow[drr,bend right=18,"J_A"'] &
L_1(K_A,\mu_A) \arrow[r,"R_A"] &
M(\Nstar) \arrow[d,"q^*","\cong"']\\
&& c_0^\perp \arrow[r,hook] & \ell_\infty^*
\end{tikzcd}
\]
\vspace{-0.4em}
\[
\begin{tikzcd}[row sep=huge,column sep=huge]
\ell_\infty \arrow[r,two heads,"q"] \arrow[dr,two heads,"Q_A"'] &
\ell_\infty/c_0 \arrow[d,two heads,"\overline Q_A"]\\
& X_A.
\end{tikzcd}
\]
}

\begin{proof}[Proof of Theorem~\ref{thm:quotients}]
The family $\mathcal A$ has cardinality $2^{\cF}$.  The targets of the
quotient maps in Proposition~\ref{prop:quotient-map} are reflexive and
infinite-dimensional, and Proposition~\ref{prop:fredholm-separation} shows
that they are pairwise Fredholm inequivalent.  Since every $Q_A$ annihilates
$c_0$, it factors through $\ell_\infty/c_0$.
\end{proof}

\section{Proof of the main theorem}

For every $A\in\mathcal A$, the exact sequences used below fit into the
commutative diagram
{\normalsize
\[
\begin{tikzcd}[row sep=huge,column sep=huge]
0 \arrow[r] & \ker Q_A \arrow[r] \arrow[d,two heads] &
\ell_\infty \arrow[r,"Q_A"] \arrow[d,two heads,"q"'] &
X_A \arrow[r] \arrow[d,equal] & 0\\
0 \arrow[r] & (\ker Q_A)/c_0 \arrow[r] &
\ell_\infty/c_0 \arrow[r,"\overline Q_A"'] & X_A \arrow[r] & 0.
\end{tikzcd}
\]
}

\begin{proof}[Proof of Theorem~\ref{thm:main}]
Apply Proposition~\ref{prop:kernel-transfer} to the maps
$(Q_A)_{A\in\mathcal A}$.  Their kernels are pairwise non-isomorphic
Grothendieck subspaces of $\ell_\infty$, they contain the canonical $c_0$,
and their quotients are the infinite-dimensional reflexive spaces $X_A$.
No kernel can be reflexive, since reflexivity is a three-space property and
$\ell_\infty$ is not reflexive.  This gives $2^{\cF}$ isomorphism classes.
The reverse inequality follows from $|\ell_\infty|=\cF$, because
$\ell_\infty$ has at most $2^{\cF}$ subsets.
\end{proof}

\begin{corollary}
There are $2^{\cF}$ pairwise non-isomorphic
$\ell_\infty$-Grothendieck subspaces of $\ell_\infty$.
\end{corollary}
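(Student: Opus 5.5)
The corollary is a direct repackaging of what has already been proved. I would take the family $(\ker Q_A)_{A\in\mathcal A}$ built in the proof of Theorem~\ref{thm:main}, with $Q_A:\ell_\infty\ontoarrow X_A$ the quotient maps of Proposition~\ref{prop:quotient-map}, and check three things. The family has the right cardinality, its members are pairwise non-isomorphic, and each member is an $\ell_\infty$-Grothendieck subspace in the sense of Definition~\ref{def:relative}.

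The steps run in the following order.

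First, by Proposition~\ref{prop:quotient-map} each $Q_A$ is a quotient map onto the infinite-dimensional reflexive space $X_A$, and $c_0\subseteq\ker Q_A$.

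Second, Proposition~\ref{prop:fredholm-separation} shows that the targets $X_A$, $A\in\mathcal A$, are pairwise Fredholm inequivalent.

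Third, Proposition~\ref{prop:kernel-transfer} applies to the family $(Q_A)_{A\in\mathcal A}$. It yields that the kernels are pairwise non-isomorphic Grothendieck spaces. Its last clause, which uses the inclusion $c_0\subseteq\ker Q_A$, yields that they are $\ell_\infty$-Grothendieck subspaces. That clause rests on the remark after Definition~\ref{def:relative}: a Grothendieck space $S$ makes $(S,j)$ $\ell_\infty$-Grothendieck for every embedding $j$, and in particular for the inclusion of the canonical $c_0$.

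Fourth, the count. Since $|\mathcal A|=2^{\cF}$, this gives $2^{\cF}$ pairwise non-isomorphic $\ell_\infty$-Grothendieck subspaces. The upper bound holds because $\ell_\infty$ has only $2^{\cF}$ subsets, so $2^{\cF}$ is also the maximum possible.

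There is no genuine obstacle here. The only point to watch is that "pairwise non-isomorphic" is meant as Banach spaces, not merely as subspaces in a fixed position. Proposition~\ref{prop:kernel-transfer} gives exactly this, because Lindenstrauss--Rosenthal extends an arbitrary isomorphism between the kernels.

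As a sanity check, one could also get $\ell_\infty$-Grothendieck subspaces from Theorem~\ref{thm:relative-construction} with $2^{\cF}$ choices of $E$. That route would require a pairwise non-isomorphic family of spaces $\ell_\infty\oplus_\infty E$, which is harder to control, so I would use the kernel family as above.
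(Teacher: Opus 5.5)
Your proposal is correct and follows the paper's route. The paper also reads the corollary off the proof of Theorem~\ref{thm:main}: it applies Proposition~\ref{prop:kernel-transfer} to the quotient maps $Q_A$, $A\in\mathcal A$, of Proposition~\ref{prop:quotient-map}, with Fredholm inequivalence from Proposition~\ref{prop:fredholm-separation}, the last clause giving the $\ell_\infty$-Grothendieck property via $c_0\subseteq\ker Q_A$, and the count coming from $|\mathcal A|=2^{\cF}$.
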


\section*{Acknowledgements}
The authors are grateful to the anonymous referee for a careful reading and
for detailed suggestions which substantially improved both the proofs and
the presentation.  In particular, the referee's question led us to isolate
Example~\ref{ex:ambient-position}.

\section*{Conflict of interest statement}
The authors declare no conflict of interest.

\section*{Data availability statement}
Data sharing is not applicable to this article as no datasets were generated
or analysed during the current study.

\end{document}